\newtheorem{thm}{Theorem}[section]
\newtheorem{lmm}[thm]{Lemma}
\newtheorem{cor}[thm]{Corollary}
\newtheorem{prop}[thm]{Proposition}
\newcommand{\ee}{\mathbb{E}}
\newcommand{\mf}{\mathcal{F}}
\newcommand{\var}{\mathrm{Var}}
\begin{document}
\title{Random multiplicative functions in short intervals}
\author{Sourav Chatterjee}
\address{Courant Institute of Mathematical Sciences, New York University, 251 Mercer Street, New York, NY 10012}
\email{sourav@cims.nyu.edu}
\thanks{Sourav Chatterjee's research was partially supported by  NSF grant DMS-1005312   and a Sloan Research Fellowship}
\author{Kannan Soundararajan}
\address{Department of Mathematics, Stanford University, Stanford, CA 94305.}
\email{ksound@stanford.edu}
\thanks{Kannan Soundararajan's research was supported in part by NSF grant DMS-1001068}  
\begin{abstract}
We consider random multiplicative functions taking the values $\pm 1$.  Using Stein's method for normal approximation, we prove a central limit theorem for the sum of such multiplicative functions in appropriate short intervals. 
\end{abstract}

\maketitle

\section{Introduction} 

Many of the functions of interest to number theorists are multiplicative.  That is 
they satisfy $f(mn)= f(m) f(n)$ for all coprime natural numbers $m$ and $n$.   Some 
examples are the M{\" o}bius function $\mu(n)$, the function $n^{it}$ for a real number $t$, 
and Dirichlet characters $\chi(n)$.  Often one is interested in the behavior of partial sums 
$\sum_{n\le x} f(n)$ of such multiplicative functions. For the proto-typical 
examples mentioned above it is a difficult problem to obtain a good understanding of 
such partial sums.  A guiding principle that has emerged is that partial sums of 
specific multiplicative functions (e.g.\ characters or the M{\" o}bius function) behave 
like partial sums of random multiplicative functions.   By random we mean that 
the values of the multiplicative function at primes are chosen randomly, and the 
values at all natural numbers are built out of the values at primes by the 
multiplicative property.  For example this viewpoint is explored in the 
context of finding large character sums in \cite{GS}.   

This raises the question of the distribution of partial sums of random multiplicative functions, and even this 
model problem appears difficult to resolve.   The aim of this paper is to study the distribution of random multiplicative functions in  
short intervals $[x, x+y]$, and in suitable ranges we shall establish 
that the sum of a random multiplicative function in that range has an approximately Gaussian distribution.  

Throughout $p$ will denote a prime number, and let $X(p)$ denote independent 
random variables taking the values $+1$ or $-1$ with equal probability.  
Let $X(n)=0$ if $n$ is divisible by the square of any prime, and if $n= p_1 \cdots p_k$ is 
square-free we define $X(n) = \prod_{j=1}^{k} X(p_j)$.  Let $M(x) = \sum_{n\le x} X(n)$.  
In \cite{H}, Halasz showed that  with probability $1$ we have 
$$ 
|M(x)| \le c x^{\frac 12} \exp\Big( d (\log \log x \log \log \log x)^{\frac 12}\Big),
$$ 
for some positive constants $c$ (which may depend on random function $X$) 
and $d$ (an absolute constant), and forthcoming work of 
Lau, Tenenbaum and Wu \cite{LTW} substantially improves 
upon this bound.   Furthermore, Halasz showed that 
 with positive probability the estimate 
 $M(x) \ge c x^{\frac 12} \exp(-d (\log \log x \log \log \log x)^{\frac 12})$ 
 holds infinitely often (for any $d>0$), and this has been substantially improved in forthcoming work of 
 Harper \cite{Ha2}.  These results may be seen as   approximations to the 
 law of the iterated logarithm for sums of independent random variables.   
 In related recent works Hough \cite{Ho} and Harper \cite{Ha} have considered the 
 distribution of $\sum_{n\le x}^{\prime} X(n)$, where the sum is restricted to 
 integers having exactly $k$ prime factors.  Note that the central limit 
 theorem covers the case $k=1$ when we have a sum of independent random variables.  
 When $k$ is a fixed positive integer, using the method of moments Hough established that such sums 
 have a Gaussian distribution.  The work of Harper extends Hough's result and 
  using the martingale central limit theorem he established 
 that the Gaussian distribution persists for $k=o( \log \log x)$, 
 and fails for $k$ of size a constant times $ \log \log x$.  Recall that most numbers $n\le x$ 
 have about $\log \log x$ prime factors, and so the dichotomy seen in Harper's result 
 is quite interesting.  Harper also showed by a conditioning argument that $M(x)$ itself cannot have a 
 normal distribution with mean $0$ and variance the number of square-free integers below 
 $x$.

  \begin{thm}\label{mainthm} Let $X$ denote a random multiplicative function as above.  Let 
  $x$ and $y$ be large natural numbers with $y=\delta x$ for some $\delta <1/10$.  
  Let $S=S(x,y)$ denote the number of square-free integers in $[x,x+y]$.  Let $Z$ 
  denote a Gaussian random variable with mean $0$ and variance $1$, and 
  let $\phi$ denote a Lipschitz function satisfying $|\phi(\alpha)-\phi(\beta)|\le |\alpha-\beta|$ 
  for all real numbers $\alpha$ and $\beta$.  Then we have that
$$
\Big  |{\Bbb E}\Big( \phi\Big( \frac{1}{\sqrt{S} } \sum_{x<n\le x+y}X(n)\Big)\Big) - 
  {\Bbb E} \phi(Z) \Big| 
$$
is bounded by a constant times
$$  
\min \Big(1,  \Big(\frac{y}{S }\Big)^{\frac 32} \frac{1}{ (\log 1/\delta)^{\frac 12}} + \frac yS\sqrt{\delta \log x  }
  + \frac{y\log x}{S^{\frac 32} \log y}\Big).
  $$
    \end{thm}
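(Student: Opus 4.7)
Since $\phi$ is $1$-Lipschitz, the task reduces to bounding a Wasserstein-$1$ distance between $W := S^{-1/2}\sum_{x<n\le x+y} X(n)$ and a standard normal, and I would attack it via Stein's method for normal approximation using exchangeable pairs. I would construct $(W, W')$ by choosing a random prime $p$ and replacing $X(p)$ by an independent $\pm 1$ copy. Stein's bound then has three standard contributions, and the three terms in the theorem should match these: a conditional-variance deviation $\ee|1 - \ee[(W-W')^2 \mid X]/(2\lambda)|$, the third moment $\ee|W-W'|^3/\lambda$, and a remainder from the approximate linearity $\ee[W - W' \mid X] \approx \lambda W$.

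The structural observation I would exploit is that every squarefree $n \in (x, x+y]$ with largest prime factor $P^+(n) > y$ factors uniquely as $n = pm$ with $p > y$ prime and $m \le 1/\delta + 1$ squarefree; since $p > y$ exceeds the interval length, distinct such $n$ give distinct $p$. Write $M = A + B$, where $A$ collects those $n$ with $P^+(n) > y$ and $B$ the $y$-smooth squarefree $n$. Then (at least when $y \ge \sqrt{x}$, so that all primes of $m$ lie below $y$) conditional on $\mathcal{F}_y := \sigma(X(q) : q \le y)$, $A = \sum_m X(m) T_m$ with $X(m) \in \{\pm 1\}$ deterministic and the $T_m$ independent sums of Rademacher variables over disjoint intervals of primes exceeding $y$. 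Sign-flips preserve distribution, so $A$ is unconditionally distributed as a sum of $S_A := \ee[A^2]$ i.i.d.\ Rademachers, and is in fact independent of $\mathcal{F}_y$.

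I would then take $p$ uniformly from the $S_A$ primes $>y$ having a multiple in $(x, x+y]$. This yields $|W-W'| \le 2/\sqrt{S}$ always and $\ee[(W-W')^2 \mid X] = 2/S$ deterministically, so the first Stein term collapses to $|1 - S_A/S| = \tilde S/S$ with $\tilde S := \ee[B^2]$. The approximate linearity reads $\ee[W - W' \mid X] = \lambda W - \lambda B/\sqrt{S}$ with $\lambda = 1/S_A$, giving linearity remainder $\ee|B|/\sqrt{S}$ and third-moment contribution $4 S_A/S^{3/2}$. The three terms of the theorem should emerge as follows: Term~3 from $S_A/S^{3/2}$ (with $S_A$ computed via the prime number theorem to produce the $\log x/\log y$ factor); Term~2 from PNT fluctuations in $\pi((x+y)/m) - \pi(x/m)$ summed over squarefree $m \le 1/\delta$ (giving the $\sqrt{\delta \log x}$); and Term~1 from a sharp bound on the linearity remainder.

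The main obstacle is extracting the sharp $1/\sqrt{\log 1/\delta}$ rate for Term~1: the naive estimate $\ee|B| \le \sqrt{\tilde S}$ only gives $\sqrt{\tilde S/S} \sim \sqrt{\rho(\log x/\log y)}$, which is too weak when $y/x$ is not very small. A refined analysis of $B$ is needed—perhaps via an iterative Stein argument applied to $B$ itself (since $B$ is again a random multiplicative sum, now over $y$-smooth integers), or via sieve-theoretic moment estimates for squarefree $y$-smooth integers in short intervals. The regime $y < \sqrt{x}$ also needs care, since then $m$ in the factorization $n = pm$ can have prime factors exceeding $y$; one would adjust the decomposition threshold (e.g.\ to $\sqrt{x+y}$ in place of $y$) to preserve the clean independence.
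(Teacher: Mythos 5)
There is a genuine gap, and you have located it only partially. Your decomposition at the threshold $y$ is fatally lopsided in the regime the theorem addresses: the number of squarefree $n\in(x,x+y]$ with a prime factor exceeding $y$ is $S_A \approx \sum_{y<p\le x} y/p \approx y\log\frac{\log x}{\log y} \approx y\,\frac{\log(1/\delta)}{\log x}$, which is $o(S)$ unless $\delta$ is a negative power of $x$. So your $A$ is the \emph{minority} of the sum and $B$ carries essentially all of the variance, $\tilde S/S \approx 1$. Your first Stein term $|1-S_A/S|=\tilde S/S$ is therefore of order $1$, not small, and the linearity remainder ${\Bbb E}|B|/\sqrt{S}$ is likewise of order $1$ (by a fourth-moment/Paley--Zygmund argument ${\Bbb E}|B|\asymp\sqrt{\tilde S}\asymp\sqrt S$, so the loss is not in the estimate ${\Bbb E}|B|\le\sqrt{\tilde S}$ but in the decomposition itself). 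The proposed bound is thus trivial throughout the stated range, and no refinement of the analysis of $B$ within a single-prime exchangeable pair can recover the $(\log 1/\delta)^{-1/2}$ rate: resampling one prime $q\le y$ changes many terms of $B$ at once, so the approximate-linearity identity ${\Bbb E}[W-W'\mid X]\approx \lambda W$ has no useful analogue for the smooth part.

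The paper's route is structurally different and is built precisely to avoid this. One conditions on the primes \emph{below} the tiny threshold $z=\frac12\log(1/\delta)$; since $\prod_{p\le z}p\le 4^z<1/\delta$, the large part $n_{\mathcal L}=n/n_{\mathcal S}$ of every squarefree $n\in(x,x+y]$ exceeds $y$, hence distinct $n$ have distinct $n_{\mathcal L}$, and conditionally $f(X_{\mathcal L})$ has mean $0$ and variance exactly $S$. All primes $p>z$ are then treated simultaneously as coordinates in Chatterjee's version of Stein's method (interpolation over random subsets ${\mathcal A}\subseteq{\mathcal L}$, not a classical exchangeable pair), and the error reduces to counting quadruples $n_1n_2n_3n_4=\square$ via the parametrization $n_1=Aru$, $n_2=Asv$, $n_3=Brv$, $n_4=Bsu$. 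The factor $(\log 1/\delta)^{-1/2}$ you could not produce arises there from the constraint that the gcd parameter $A=(n_1,n_2)$ must contain a prime from ${\mathcal L}$, forcing $A\ge z$ in $\sum_A 1/A^2\ll 1/z$. Your observation that the rough part is conditionally a sum of i.i.d.\ Rademachers is correct but does not help, since it is the smooth part that dominates.
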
  
  
  We recall that the Kantorovich-Wasserstein distance between 
  two probability measures $\mu$ and $\nu$ on the real 
  line, denoted ${\mathcal W}(\mu,\nu)$, is defined as the supremum of $|\int hd\mu -\int hd\nu|$ over 
  all Lipschitz functions $h$ satisfying $|h(\alpha)-h(\beta)| \le |\alpha-\beta|$ for 
  all real numbers $\alpha$ and $\beta$.   Thus our Theorem gives an estimate 
  for the Kantorovich-Wasserstein distance between a normal distribution with 
  mean zero and variance $1$, and the distribution of sums of random multiplicative 
  functions in short intervals.    An intuitive way to assess 
  the distance between two probability measures is the Kolmogorov statistic: 
  ${\mathcal K}(\mu,\nu) = \sup_{x\in {\Bbb R}} | \int_{-\infty}^{x} d\mu - \int_{-\infty}^{x} d\nu|$.  
  By a standard smoothing argument, we shall show how our estimate 
  for the Kantorovich-Wasserstein distance can be used to bound the Kolmogorov statistic.  

\begin{cor}  With notations as in Theorem \ref{mainthm} we have that 
$$ 
\sup_{t\in {\Bbb R}} \Big| {\Bbb P} \Big( \frac{1}{\sqrt{S}} \sum_{x<n\le x+y} X(n) \in (-\infty, t)\Big) 
- \frac{1}{\sqrt{2\pi}} \int_{-\infty}^{t} e^{-z^2/2} dz \Big| 
$$ 
is bounded by a constant times 
$$ 
\min \Big(1 , \Big(\frac{y}{S}\Big)^{\frac 34} \frac{1}{(\log 1/\delta)^{\frac 14}} 
+ \Big( \frac yS\Big)^{\frac 12} (\delta \log x)^{\frac 14} + 
\frac{\sqrt{y\log x}}{S^{\frac 34} \sqrt{\log y}}\Big).
$$ 
\end{cor}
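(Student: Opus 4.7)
The plan is to derive the Kolmogorov bound from the Kantorovich--Wasserstein bound of Theorem \ref{mainthm} via a standard smoothing argument. Write $W=\frac{1}{\sqrt S}\sum_{x<n\le x+y} X(n)$ and let $Z$ denote a standard normal random variable. Let $B(x,y)$ denote the explicit bound appearing in Theorem \ref{mainthm}, so that $|\ee \phi(W)-\ee\phi(Z)|\le C\cdot B(x,y)$ for every $1$-Lipschitz $\phi$.

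For a parameter $\ve>0$ to be chosen, and for each $t\in\rr$, introduce the piecewise-linear test functions
$$
\phi_\ve^+(u)=\begin{cases}1 & u\le t,\\ 1-(u-t)/\ve & t<u<t+\ve,\\ 0 & u\ge t+\ve,\end{cases}\qquad \phi_\ve^-(u)=\phi_\ve^+(u+\ve).
$$
Both are bounded between the indicators of $(-\infty,t-\ve)$ and $(-\infty,t+\ve)$, and both have Lipschitz constant $1/\ve$. Hence $\ve\phi_\ve^\pm$ are $1$-Lipschitz, so Theorem \ref{mainthm} yields $|\ee\phi_\ve^\pm(W)-\ee\phi_\ve^\pm(Z)|\le C\cdot B(x,y)/\ve$.

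Combining these inequalities with the sandwich
$$
\ee\phi_\ve^-(W)\le \pp(W\le t)\le \ee\phi_\ve^+(W),
$$
and with the elementary Gaussian estimate $\ee\phi_\ve^+(Z)-\ee\phi_\ve^-(Z)\le 2\ve/\sqrt{2\pi}$ (which follows since the Gaussian density is bounded by $1/\sqrt{2\pi}$), one obtains
$$
\Big|\pp(W\le t)-\pp(Z\le t)\Big|\le \frac{C\cdot B(x,y)}{\ve}+\frac{2\ve}{\sqrt{2\pi}}.
$$
Choosing $\ve=\sqrt{B(x,y)}$ balances the two terms and gives a bound of order $\sqrt{B(x,y)}$. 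Finally, distributing the square root across the three summands in $B(x,y)$ using $\sqrt{a+b+c}\le \sqrt a+\sqrt b+\sqrt c$ produces exactly the three terms stated in the Corollary, and the trivial bound of $1$ handles the case when $B(x,y)$ is not small.

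There is no real obstacle here; the argument is routine. The only point that requires a moment of care is verifying that the Lipschitz constant of $\phi_\ve^\pm$ is $1/\ve$ so that the correct factor of $1/\ve$ appears on the right, and that the final optimization in $\ve$ produces precisely the square-root of each of the three terms of Theorem \ref{mainthm} as stated in the Corollary.
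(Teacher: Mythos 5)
Your proposal is correct and follows essentially the same route as the paper: the authors also sandwich the indicator of $(-\infty,t)$ between Lipschitz majorants/minorants supported on an $\epsilon$-window, apply the Wasserstein bound of Theorem \ref{mainthm}, use the boundedness of the Gaussian density to control the smoothing error, and optimize $\epsilon=\sqrt{{\mathcal W}}$ to get ${\mathcal K}\le 2\sqrt{{\mathcal W}}$. The only cosmetic difference is that the paper scales the test function to have Lipschitz constant $1$ from the outset rather than dividing by $\ve$ afterwards.
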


 In an interval $[x,x+y]$ we expect that there are about $\sim \frac{6}{\pi^2} y$ 
 square-free integers.  The work of Filaseta and Trifonov \cite{FT} shows that 
 if $x\ge y\ge C x^{\frac 15}\log x$ for some positive 
 constant $C$ then a positive proportion 
 of the integers in $[x,x+y]$ are square-free.  The theorem in Filaseta 
 and Trifonov only asserts the existence of a square-free integer in such an 
 interval, but their proof plainly gives the stronger result above.  Therefore 
 for all short intervals with $Cx^{\frac 15}\log x< y = o(x/\log x)$, 
 our Theorem shows that the distribution of $\sum_{x< n \le x+y} X(n)$ 
 is approximately normal.  
 Granville \cite{G} has shown that the $ABC$-conjecture implies that 
 the interval $[x,x+y]$ contains a positive proportion of 
 square-free integers if $x^{\epsilon} \ll y\le x$ for any $\epsilon >0$; again 
 Granville only stated the existence of one square-free integer in such 
 intervals, but his proof gives the stronger assertion above.  Thus, 
 on the $ABC$-conjecture, for any short interval with $x^{\epsilon}\ll 
 y = o(x/\log x )$ our Theorem shows that 
 the distribution of $\sum_{x< n \le x+y} X(n)$ is approximately normal.

 The proof of this result is based on a version of Stein's method for 
 normal approximation developed in \cite{chatterjee08}.    This involves calculating 
 quantities related to the fourth moment of $\sum_{x<n\le x+y} X(n)$.  
 The fourth moment itself is calculated in Proposition 3.1 below.  
 If the interval $[x,x+y]$ contains a positive proportion of 
 square-free numbers, then Proposition 3.1 shows that 
 the fourth moment is asymptotically the fourth moment of a 
 normal distribution provided $y = o(x/\log x)$.  Further, 
when $y$ is of size a constant times $x/\log x$, the argument there 
shows that the fourth moment does not match the fourth moment of a 
normal distribution.  Thus it seems plausible that for $x/\log x \ll y\le x$ 
the distribution of $\sum_{x<n\le x+y} X(n)$ is not normal, but 
we do not have a proof of this assertion.   By modifying the conditioning 
argument in Harper \cite{H} we can establish that if $y$ is of a constant 
times $x$ then the distribution of $\sum_{x< n \le x+y} X(n)$ is not normal.  

The method developed here could also be used to study 
the distribution of $\sum_{n\in {\mathcal S}} X(n)$ for 
other subsets ${\mathcal S}$ of square-free numbers in $[1,x]$.  
For example, we can obtain in this manner a different treatment 
of the results of Harper and Hough.   Another example is the 
set of integers below $x$ that are $\equiv a\pmod q$ where $(a,q)=1$.  
If $q/\log x$ is large, and this arithmetic progression contains the expected number 
of square-free integers, the distribution should be normal analogously to Theorem 1.1.

 \section{Beginning of the proof}  
 
 Let $x$, $y$ and $\delta$ be as in the statement of the 
 Theorem, and let $X$ denote a random multiplicative function as 
 defined in the Introduction.   We let $z$ denote $\frac 12 \log (1/\delta)$.  
 We divide the primes below $2x$ into large (that is $>z$) 
 and small (that is $\le z$) primes.  We denote the 
 set of large primes by ${\mathcal L}$, and the set 
 of small primes by ${\mathcal S}$.   Let ${\mathcal F}$ be 
 the sigma-algebra generated by $X(p)$ for all $p\in {\mathcal S}$, 
 and we denote the conditional expectation given ${\mathcal F}$ 
 by ${\Bbb E}^{\mathcal F}$.  
 
 Let $X_{\mathcal L}$ denote the vector $(X(p))_{p\in {\mathcal L}}$.  
 Then, given ${\mathcal F}$, we may think of $\sum_{x<n\le x+y}X(n)$ 
 as a function of $X_{\mathcal L}$, and we write this function as $f(X_{\mathcal L})$.  
 
 \begin{lmm} With the above notations we 
 have 
 \[ 
 {\Bbb E}^{\mathcal F}(f(X_{\mathcal L}))  = 0, 
 \] 
 and 
 \[ 
 {\Bbb E}^{\mathcal F}(f(X_{\mathcal L})^2) = S(x,y). 
 \] 
 \end{lmm}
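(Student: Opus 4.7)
The plan is to expand $f(X_{\mathcal L})$ and $f(X_{\mathcal L})^2$ as sums over square-free integers $n$ in $(x,x+y]$, split each $X(n)$ as a product of a small-prime part (which is $\mathcal F$-measurable) and a large-prime part (which is independent of $\mathcal F$), and compute the conditional expectation $\mathbb E^{\mathcal F}$ using the fact that the $X(p)$ for $p\in\mathcal L$ are i.i.d.\ mean-zero random variables with $X(p)^{2}=1$. Concretely, for square-free $n$ write $P_{\mathcal L}(n)=\prod_{p\mid n,\,p\in\mathcal L}p$ and $P_{\mathcal S}(n)=\prod_{p\mid n,\,p\in\mathcal S}p$, so that $X(n)=X(P_{\mathcal L}(n))\,X(P_{\mathcal S}(n))$.

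For the first identity, taking conditional expectations term by term yields
\[
\mathbb E^{\mathcal F}(f(X_{\mathcal L}))=\sum_{\substack{x<n\le x+y\\ n\text{ square-free}}} X(P_{\mathcal S}(n))\,\mathbb E\bigl[X(P_{\mathcal L}(n))\bigr],
\]
and the inner expectation vanishes unless $P_{\mathcal L}(n)=1$, i.e., unless $n$ is $z$-smooth. It therefore suffices to show that $(x,x+y]$ contains no square-free $z$-smooth integer at all. Any such integer is at most $B:=\prod_{p\le z}p$, and by Erd\H{o}s' elementary bound $B\le 4^{z}$. Since $z=\tfrac12\log(1/\delta)$ this gives $B\le(1/\delta)^{\log 2}<1/\delta\le x$ (using $\log 2<1$ and $\delta=y/x$), which rules out such $n$ outright.

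For the second identity, expanding the square and conditioning in the same way produces
\[
\mathbb E^{\mathcal F}(f(X_{\mathcal L})^{2})=\sum_{\substack{m,n\in(x,x+y]\\ \text{both square-free}}} X(P_{\mathcal S}(m))\,X(P_{\mathcal S}(n))\,\mathbf{1}\{P_{\mathcal L}(m)=P_{\mathcal L}(n)\},
\]
because $X(P_{\mathcal L}(m))\,X(P_{\mathcal L}(n))$ is a product of independent mean-zero variables $X(p)$ over the symmetric difference of the large-prime supports, and averages to zero unless those supports coincide. The diagonal $m=n$ contributes $X(P_{\mathcal S}(n))^{2}=1$ for every square-free $n$ in the interval and therefore totals $S(x,y)$. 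The main step is to rule out the off-diagonal pairs: if distinct $m,n$ share a common large part $L$, write $m=La$ and $n=Lb$ with $a\ne b$ distinct $z$-smooth square-free integers, so that
\[
L\le L|a-b|=|m-n|\le y=\delta x \quad\text{and}\quad L>x/\max(a,b)\ge x/B,
\]
which together force $B>1/\delta$ and contradict the Erd\H{o}s estimate exactly as in the first identity.

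The only nontrivial input is thus the classical bound $\prod_{p\le z}p\le 4^{z}$, applied in precisely the regime where the calibration $z=\tfrac12\log(1/\delta)$ makes the inequality $B<1/\delta$ sharp enough to eliminate both the $z$-smooth integers in $(x,x+y]$ and the coincidences of large-prime parts. Everything else is direct bookkeeping with conditional expectations, and I do not anticipate any further obstacle.
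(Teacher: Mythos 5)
Your proof is correct and follows essentially the same route as the paper's: both decompose $X(n)$ into its small-prime and large-prime parts, use $\prod_{p\le z}p\le 4^z=(1/\delta)^{\log 2}<1/\delta$ with the calibration $z=\tfrac12\log(1/\delta)$ to force the large part of every square-free $n\in(x,x+y]$ to exceed $y$, and deduce from this both the vanishing of the first conditional moment and the impossibility of two distinct integers in the interval sharing the same large part. The only difference is cosmetic (you phrase the off-diagonal exclusion via $L\mid(m-n)$ and the bound $B<1/\delta$, while the paper states directly that $n_{\mathcal L}>y$).
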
 
 \begin{proof} Write a square-free number $n \in [x,x+y]$ 
 as $n_{\mathcal S} n_{\mathcal L}$ where $n_{\mathcal S}$ 
 is the product of the primes in ${\mathcal S}$ that divide $n$, and $n_{\mathcal L}$ 
 the product of the primes in ${\mathcal L}$ that divide $n$.  
 From our choice of $z=\frac 12 \log (1/\delta)$ we note that $n_{\mathcal S} \le \prod_{p\le z}p 
 \le 4^z$.  It follows that $n_{\mathcal L} = n/n_{\mathcal S} > \delta x =y$.  
 From this we obtain that ${\Bbb E}^{\mf} (f(X_{\mathcal L})) = 0$.  
 Moreover, note that if $n$ and $n^{\prime}$ are distinct square-free numbers 
 in $[x,x+y]$ then we must have $n_{\mathcal L} \neq n_{\mathcal L}^{\prime}$.  
 Therefore we deduce that ${\Bbb E}^{\mf} (f(X_{\mathcal L})^2) = S(x,y)$, 
 proving our Lemma. 
 \end{proof} 
 \vskip.1in
 Let $X_{\mathcal L}^{\prime}$ denote an independent copy of $X_{\mathcal L}$.  
 For each subset ${\mathcal A}$ of ${\mathcal L}$ we write $X_{\mathcal L}^{\mathcal A}$ to be 
 the vector defined as $X^{\mathcal A}(p) = X(p) $ for $p \in {\mathcal L}\backslash {\mathcal A}$, 
 and $X^{\mathcal A}(p)= X^{\prime}(p)$ for $p\in {\mathcal A}$.   
 For a proper subset ${\mathcal A}$ of ${\mathcal L}$, and a prime $p\in {\mathcal L}\backslash 
 {\mathcal A}$ we define 
 $$ 
 \Delta_pf := f(X_{\mathcal L}) - f(X_{\mathcal L}^{\{p\}}), 
 $$ 
 and 
 $$
 \Delta_pf^{\mathcal A} := f(X_{\mathcal L}^{\mathcal A}) - f(X_{\mathcal L}^{{\mathcal A}\cup 
 \{ p\}}).
 $$ 
 Finally define 
 $$
 T:= \frac 12 \sum_{{\mathcal A} \subsetneq {\mathcal L}} \frac{1}{\binom{|{\mathcal L}|}{|{\mathcal A}|} 
 (|{\mathcal L}|-|{\mathcal A}|)} \sum_{p \in {\mathcal L}\backslash {\mathcal A} } \Delta_p (f) \Delta_p(f^{\mathcal A}).
 $$

With these notations, and Lemma 2.1,  Theorem 2.2 from \cite{chatterjee08} enables us to get the following result.
 
\begin{prop}\label{normthm} 
Let $Z$ denote a random variable with a Gaussian distribution with mean 
zero and variance $1$.  Let $W= \frac{1}{\sqrt{S}} \sum_{x<n \le x+y}X(n)$, 
and let $\phi$ denote a Lipschitz function satisfying $|\phi(\alpha)-\phi(\beta)|\le |\alpha-\beta|$ 
for all real numbers $\alpha$ and $\beta$.  We have
\[
|\ee^\mf\phi(W) - \ee \phi(Z)| \le \frac{(\var^\mf(\ee^\mf(T|X)))^{1/2}}{S} + \frac{1}{2S^{3/2}}\sum_{p\in 
{\mathcal L}} \ee^\mf|\Delta_pf|^3.
\]
\end{prop}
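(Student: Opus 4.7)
The plan is to apply Theorem 2.2 of \cite{chatterjee08} directly, working conditionally on the $\sigma$-algebra $\mf$. That theorem bounds the Kantorovich-Wasserstein distance between a mean-zero, variance-one function of a vector of independent symmetric $\pm 1$ random variables and a standard normal, in terms of precisely the quantity $T$ defined above together with the sum of third absolute moments of the single-coordinate differences $\Delta_p f$.

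First I would verify the hypotheses. Conditionally on $\mf$, the coordinates of $X_{\mathcal L} = (X(p))_{p \in {\mathcal L}}$ remain independent symmetric $\pm 1$ random variables, since $\mf$ is generated by the disjoint family $\{X(p) : p \in {\mathcal S}\}$. By Lemma 2.1, $f(X_{\mathcal L})$ has conditional mean zero and conditional variance exactly $S=S(x,y)$, so $W = f(X_{\mathcal L})/\sqrt{S}$ has conditional mean zero and conditional variance one. The objects $\Delta_p f$, $\Delta_p f^{\mathcal A}$, and $T$ are exactly the quantities appearing in the statement of Chatterjee's theorem, applied to $f$.

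Next I would propagate the normalization by $\sqrt{S}$ through the two error terms. Since $\Delta_p W = \Delta_p f/\sqrt{S}$, the third-moment term becomes
\[
\tfrac{1}{2}\sum_{p\in {\mathcal L}} \ee^\mf |\Delta_p W|^3 = \frac{1}{2 S^{3/2}}\sum_{p\in {\mathcal L}} \ee^\mf|\Delta_p f|^3.
\]
Each summand in the definition of $T$ is bilinear in the discrete differences, so the $T$-quantity associated to the normalized function $W$ is $T/S$; therefore $(\var^\mf(\ee^\mf((T/S)|X)))^{1/2} = (\var^\mf(\ee^\mf(T|X)))^{1/2}/S$, which produces the first term of the asserted bound. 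Combining, the inequality in Proposition \ref{normthm} is an immediate consequence.

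The only real obstacle is bookkeeping: matching the framework of \cite{chatterjee08}, which is typically phrased unconditionally for an abstract square-integrable function of i.i.d.\ symmetric $\pm 1$ variables, to our conditional setup on $\mf$, and ensuring that the $\sqrt{S}$ rescaling is tracked consistently through each of the two error terms. There is no further probabilistic input required; once the dictionary between the two formulations is fixed, the result is a direct quotation of the cited theorem.
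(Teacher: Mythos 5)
Your approach is exactly the paper's: apply Theorem 2.2 of \cite{chatterjee08} conditionally on $\mf$, using Lemma 2.1 to verify that $f(X_{\mathcal L})$ has conditional mean $0$ and conditional variance $S$, and then track the $\sqrt{S}$ normalization through both error terms. One detail in your ``dictionary'' is off, and the paper flags it explicitly: Chatterjee's theorem gives the first term with $\var^\mf(\ee^\mf(T\mid W))$, i.e.\ conditioning on the function value $W$, not on the full vector $X$. The stated bound with $\var^\mf(\ee^\mf(T\mid X))$ then requires the extra (easy) observation that $\ee^\mf(T\mid W)=\ee^\mf(\ee^\mf(T\mid X)\mid W)$, so by Jensen's inequality the conditional variance given $X$ dominates that given $W$. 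Your write-up silently substitutes $X$ for $W$ in the variance term; adding that one line closes the gap, and everything else is a correct direct quotation of the cited theorem.
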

Here conditioning on $X$ means that we are conditioning on the whole vector $(X(n))_{n\ge 1}$. 
Actually, the bound given by Theorem 2.2 from the paper \cite{chatterjee08} has $\var^\mf(\ee^\mf(T|W))$ in the first term instead of $\var^\mf(\ee^\mf(T|X))$. However, the latter quantity is at least as 
large as the former because $\ee^\mf(T|W) = \ee^\mf(\ee^\mf(T|X)|W)$ and conditioning reduces variance.

 We shall use Proposition 2.2 to estimate $|{\Bbb E}(\phi(W)) - {\Bbb E}(\phi(Z))|$.  Note 
 that this quantity is bounded by 
 $$ 
 {\Bbb E} | {\Bbb E}^{\mathcal F}(\phi(W)) - {\Bbb E}(\phi(Z))| 
 \le \frac{1}{S} {\Bbb E} \Big(  \var^\mf(\ee^\mf(T|X)))^{1/2}\Big) 
 + \frac{1}{2S^{\frac 32}} \sum_{p \in {\mathcal L} }  \ee|\Delta_pf|^3.
 $$
 By the Cauchy-Schwarz inequality the first term above is 
 $$ 
 \le \frac 1S \Big( {\Bbb E}  \var^\mf(\ee^\mf(T|X)))\Big)^{\frac 12} \le  
 \frac 1S \Big( \var( {\Bbb E}^{\mf} (T|X))\Big)^{\frac 12}.  
  $$ 
  We deduce that 
  \begin{equation} 
  \label{est1}
  |{\Bbb E}(\phi(W)) - {\Bbb E}(\phi(Z))| \le  \frac 1S \Big( \var( {\Bbb E}^{\mf} (T|X))\Big)^{\frac 12} 
   + \frac{1}{2S^{\frac 32}} \sum_{p\in {\mathcal L}} {\Bbb E}|\Delta_pf|^3. 
   \end{equation} 
   We will now focus on estimating the 
   two terms in the RHS above.  The second term will be estimated in the 
   next section, and the first in Section 4.  We now simplify the expression in the 
   first term a little.  
   
   For each $p\in {\mathcal L}$, let ${\mathcal N}(p)$ denote all square-free numbers in the interval 
   $[x/p, (x+y)/p]$ which are coprime to $p$. Note that
\[
\Delta_p f = (X(p) - X^{\prime}(p))\sum_{k\in {\mathcal N}(p)} X(k), 
\]
and if $p\in {\mathcal L} \backslash {\mathcal A}$, 
\[
\Delta_pf^{\mathcal A} = (X(p) - X^{\prime}(p)) \sum_{k\in {\mathcal N}(p)} X^{\mathcal A}(k),
\]
where $X^{\mathcal A}(k)$ is defined in the obvious way replacing $X$ by $X^{\prime}$ on the 
primes in ${\mathcal A}$. Therefore
\begin{equation*}\label{prod}
\begin{split}
\Delta_p f \Delta_p f^{\mathcal A} &= (X(p)-X'(p))^2\biggl(\sum_{k \in {\mathcal N}(p)} X(k)\biggr)
\biggl(\sum_{ \ell \in {\mathcal N}(p)} X^{\mathcal A}(\ell)\biggr),\\
\end{split}
\end{equation*}
and since $X(k)$ and $X^{\mathcal A}(\ell)$ do not depend on $X_p$, we see that
\begin{align*}
\frac{1}{2}\ee^\mf (\Delta_p f\Delta_p f^{\mathcal A} \mid X) &= \biggl(\sum_{k\in {\mathcal N}(p)} X(k)\biggr)\biggl(\sum_{\ell \in {\mathcal N}^{\mathcal A}(p)} X(\ell)\biggr)\\
&= |{\mathcal N}^{\mathcal A}(p)| + \sum_{{k \in {\mathcal N}(p), \  \ell \in {\mathcal N}^{\mathcal A}(p)} 
\atop {k \neq \ell} } X(k) X(\ell),
\end{align*}
where ${\mathcal N}^{\mathcal A}(p)$ 
denotes the set of all square-free integers in $[x/p,(x+y)/p]$ that are not divisible $p$ and 
by any prime $q\in {\mathcal A}$. 
Write  the quantity $T$ in Proposition~\ref{normthm}  as
\begin{align*}
T&= \frac{1}{2}\sum_{p\in {\mathcal L}} \sum_{{\mathcal A}\subseteq {\mathcal L}\backslash\{p\}} 
\nu({\mathcal A}) \Delta_pf \Delta_pf^{\mathcal A},
\end{align*}
where
\[
\nu({\mathcal A}) := \frac{1}{{|{\mathcal L}| \choose |{\mathcal A}|}(|{\mathcal L}|-|{\mathcal A}|)} 
= \frac{1}{|{\mathcal L}|{|{\mathcal L}|-1\choose |{\mathcal A}|}}. 
\]
Thus, 
\begin{align*}
\ee^\mf(T\mid X) &= \frac{1}{2}\sum_{p\in {\mathcal L}} \sum_{{\mathcal A}\subseteq {\mathcal L}\backslash \{p\}} \nu({\mathcal A}) \ee^\mf (\Delta_p f\Delta_p f^{\mathcal A} \mid X)\\
&= \sum_{p\in {\mathcal L}} \sum_{{\mathcal A}\subseteq {\mathcal L}\backslash \{p\}} \nu({\mathcal A}) |{\mathcal N}^{\mathcal A}(p)| \\
&\qquad + \sum_{p\in {\mathcal L}} \sum_{k\in {\mathcal N}(p)}
\sum_{\ell \in {\mathcal N}(p)\backslash \{k\}} \frac{1}{\omega_{\mathcal L}(\ell p)} X(k) X(\ell), 
\end{align*}
where $\omega_{\mathcal L}(n)$ denotes the number of distinct prime factors of $n$ that are 
in $\mathcal L$ and 
the equality above holds because 
\begin{align*}
 & \sum_{{\mathcal A}\subseteq {\mathcal L}\backslash\{p\}} \nu({\mathcal A}) 1_{\{\ell\in {\mathcal N}^{\mathcal A}(p)\}}
 =\sum_{{\mathcal A}\subseteq {\mathcal L} \backslash (\{p\}\cup \{ q|\ell \})} \nu({\mathcal A}) \\
&= \sum_{k=0}^{|{\mathcal L}|-\omega_{\mathcal L}(\ell p)} \frac{1}{|{\mathcal L}|{|{\mathcal L}|-1\choose k}} {|{\mathcal L}|-\omega_{\mathcal L}(\ell p)\choose k} = \frac{1}{\omega_{\mathcal L}(\ell p)}. 
\end{align*}
The last step above involves a combinatorial identity and we leave the 
pleasure of proving it to the reader; a generalization of this identity appears 
as Problem B2 of the 1987 Putnam competition see \cite{KPV}.  
Now we define 
\[
T_p := \sum_{k\in {\mathcal N}(p)}\sum_{\ell \in {\mathcal N}(p)\backslash \{k\}} \frac{1}{\omega_{\mathcal L}(\ell p)} X(k) X(\ell). 
\]
Then we may conclude that 
\begin{equation}
\label{est2} 
\var({\Bbb E}^{\mf} (T\mid X) ) = \var\Big( \sum_{p \in {\mathcal L}} T_p \Big). 
\end{equation}


\section{The fourth moment and a parametrization of solutions}

In this section we shall evaluate the fourth moment 
$$
{\Bbb E}\Big( \sum_{x<n\le x+y} X(n)\Big)^4, 
$$ 
for a suitable range of the variables $x$ and $y$.  The techniques 
involved in this calculation will be used in the proof of our main 
Theorem.   When we expand out the fourth moment, we find 
that we are counting solutions to the equation 
\[
 n_1 n_2 n_3 n_4 =\square
\]
where $n_1$, $n_2$, $n_3$, $n_4$ are square-free 
integers with $n_j \in [x, x+y]$ and $\square$ denotes a perfect square.  Recall that $y=x\delta$.   
We begin by parametrizing such solutions.  

Write $A=(n_1,n_2)$ and $B=(n_3, n_4)$, and set 
$n_1=A n_1^*$, $n_2=An_2^*$, $n_3=Bn_3^*$ and $n_4=Bn_4^*$.  
Then $(n_1^*,n_2^*)= (n_3^*,n_4^*)=1$ and the equation 
$n_1n_2 n_3n_4= \square$ is equivalent to $n_1^*n_2^*=n_3^*n_4^*$.  
Now write $r=(n_1^*,n_3^*)$ and $s=(n_2^*,n_4^*)$.  Then $(r,s)=1$ 
and we see that $n_1^*=ru$, $n_3^*=rv$, $n_2^*=sv$ and $n_4^*=su$ 
where $u$ and $v$ are natural numbers with $(u,v)=1$.   

Summarizing the above paragraph, we see that the solutions 
to $n_1n_2n_3n_4=\square$ are parametrized by six variables 
$A$, $B$, $r$, $s$, $u$, $v$, with $(r,s)=(u,v)=1$ and 
with 
\[ 
n_1 = Aru, n_2= Asv, n_3=Brv, n_4=Bsu. 
\]
  There are 
additional coprimality conditions to ensure that these numbers 
are square-free.  Since $(1+\delta)^{-2} \le n_1n_2/(n_3n_4) \le (1+\delta)^2$ 
we see that 
\[
(1+\delta)^{-1} \le A/B \le (1+\delta).
\]
Similarly using $n_1n_3/(n_2n_4) = (r/s)^2$ we have 
\[
(1+\delta)^{-1}\le  \frac{r}{s} \le (1+\delta),
\]
and finally using $n_1n_4/(n_2n_3) = u^2/v^2$ we get that
\[
 (1+\delta)^{-1} \le u/v \le (1+\delta).
\]
In what follows we shall make use of this parametrization and 
the above inequalities for the ratios $A/B$, $r/s$, $u/v$.  One consequence 
of these inequalities is that if $A\neq B$ then $A$ and $B$ are both $\ge 1/\delta$.  Similarly if $r\neq s$ then both $r$ and $s$ are $\ge 1/\delta$ 
and if $u\neq v$ then $u$ and $v$ are both $\ge 1/\delta$.   

\begin{prop}
 \label{fourth} Call any solution to $n_1n_2n_3n_4=\square$ 
where the variables are equal in pairs a {\em diagonal solution}.  
The number of non-diagonal solutions to $n_1 n_2 n_3 n_4 =\square$ 
with $n_j \in [x,x(1+\delta)]$ and $n_j$ square-free is at most 
\[
 80x^2 \delta^3 (1+2 \log x) (1+ 2\delta\log x).
\]
Therefore, with $S$ denoting the number of square-free integers in $[x,x(1+\delta)]$  
\[ 
 {\Bbb E}\Big( \Big(\sum_{k=x}^{x(1+\delta)} X(k) \Big)^4 \Big) 
= 3 S^2 +O(x^2 \delta^3 (1+\delta \log x)\log x ).
\]
\end{prop}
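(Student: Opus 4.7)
The plan is to expand the fourth moment as a sum over ordered $4$-tuples and identify the non-vanishing contributions. Using independence of the $X(p)$ and $X(p)^2 = 1$,
\[
\ee\Big(\Big(\sum_{k \in [x, x(1+\delta)]} X(k)\Big)^4\Big) = \#\{(n_1, n_2, n_3, n_4) : n_j \text{ square-free in } [x,x(1+\delta)],\ n_1 n_2 n_3 n_4 = \square\}.
\]
I would split this count into \emph{diagonal} solutions, in which the $n_j$'s are equal in pairs, and non-diagonal ones.  The three pairings $\{n_1 = n_2, n_3 = n_4\}$, $\{n_1 = n_3, n_2 = n_4\}$, $\{n_1 = n_4, n_2 = n_3\}$ each yield $S^2$ tuples, while every pairwise or triple overlap of these events forces $n_1 = n_2 = n_3 = n_4$ and contributes $S$.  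Inclusion--exclusion gives the diagonal count as exactly $3S^2 - 2S$, and the $-2S$ is absorbed into the error.

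For the non-diagonal count I would use the six-variable parametrization $n_1 = Aru$, $n_2 = Asv$, $n_3 = Brv$, $n_4 = Bsu$ with $(r,s) = (u,v) = 1$.  A direct check shows the tuple is diagonal precisely when at least two of the three equalities $A = B$, $r = s$, $u = v$ hold; so non-diagonal means at most one of them holds, giving four sub-cases.  Since $(r,s) = 1$, the equality $r = s$ forces $r = s = 1$ (and similarly for $u = v$).  The sub-cases are: (a) $A = B$ with $r \neq s, u \neq v$ (so $r, s, u, v \geq 1/\delta$); (b) $r = s = 1$ with $A \neq B, u \neq v$ (so $A, B, u, v \geq 1/\delta$); (c) the symmetric $u = v = 1$ with $A \neq B, r \neq s$; and (d) all three pairs unequal (so all six variables are $\geq 1/\delta$).

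Each sub-case is counted by fixing some variables and bounding the rest via interval-length estimates.  In sub-case (b), with $n_1 = Au, n_2 = Av, n_3 = Bv, n_4 = Bu$, the four conditions $n_j \in [x, x(1+\delta)]$ force both $A$ and $B$ into the interval $[x/\min(u,v),\ x(1+\delta)/\max(u,v)]$ of length at most $x\delta/\max(u,v)$, giving at most $(x\delta/v + 1)^2$ ordered pairs $(A, B)$; summing over $u \in [1/\delta, 2x\delta]$ and $v \in (u, u(1+\delta)]$ yields $O(x^2 \delta^3 \log x)$.  Sub-case (c) is identical by symmetry.  In sub-case (a), fixing $A$ and setting $M = x/A$ reduces the four conditions to $ru, sv, rv, su \in [M, M(1+\delta)]$; the number of $(r, u)$ with $r, u \geq 1/\delta$ and $ru \in [M, M(1+\delta)]$ is $\sim M\delta \log M$, and for each such $(r, u)$ the remaining constraints on the shifts $(s - r, v - u)$ cut out a triangle of area $\sim M\delta^2/2$.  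This produces $\sim M^2\delta^3 \log M$ tuples per $A$, and the convergent sum $\sum_A A^{-2} = O(1)$ then gives $O(x^2\delta^3 \log x)$.  Sub-case (d) is analogous but with $(A, B)$ also ranging freely, introducing an extra factor $\delta \log x$ and thus $O(x^2 \delta^4 \log^2 x)$.  Summing the four cases yields the stated non-diagonal bound, and combining with the diagonal count gives the fourth-moment formula.

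The most delicate point is the count in sub-case (a): the four interval conditions $Aru, Asv, Arv, Asu \in [x, x(1+\delta)]$ are tightly coupled, and one must track the full interval constraint $ru \in [M, M(1+\delta)]$ (rather than the weaker $ru \leq 2M$, which would give a useless $x^2\delta^2 \log x$ bound) in order to gain the factor of $\delta$ required to hit the target.
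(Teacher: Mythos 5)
Your argument is correct and follows essentially the same route as the paper: the same six-variable parametrization $n_1=Aru$, $n_2=Asv$, $n_3=Brv$, $n_4=Bsu$, the same observation that unequal coprime pairs force both members to be at least $1/\delta$, and the same interval-length lattice-point counting; the paper merely organizes the non-diagonal solutions via the symmetry $(r,s)\leftrightarrow(u,v)$ instead of your four sub-cases, and its direct product count of the choices for $s$ and then $v$ already yields the $M\delta^2$ factor, so the triangle-area refinement you highlight as delicate is not actually needed. The only thing your sketch does not recover is the explicit constant $80$ in the first assertion, but since only the big-$O$ form is ever used this is immaterial.
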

\begin{proof}  Suppose $A$, $B$, $r$, $s$, $u$, $v$ parametrize 
a non-diagonal solution to $n_1n_2n_3n_4=\square$.  Then 
either one of $u$ or $v$ is not $1$, or one of $r$ or $s$ is 
not $1$; for if $u=v=1$ and $r=s=1$ then $n_1=n_2$ and $n_3=n_4$.  
Since these cases are symmetric we will only deal with the 
case when one of $u$ or $v$ is not $1$, and the total 
number of solutions is at most twice the number of solutions 
in this case.  

Suppose then that $u$ or $v$ is not $1$, 
and since $(u,v)=1$ this means that $u\neq v$ and so both $u$ and $v$ are 
$\ge 1/\delta$.  Therefore it 
follows that $Ar \le x\delta(1+\delta)$.  Further either $A\neq B$ or 
$r\neq s$, and so either $A$ or $r$ must be $\ge 1/\delta$.
Now suppose 
$A$ and $r$ are given with $\max(A, r)\ge 1/\delta$ and $Ar \le x\delta(1+\delta)$.  
Since $(1+\delta)^{-1} \le A/B\le (1+\delta)$ it 
follows that there are at most $(1+2A\delta)$ choices 
for $B$.  Similarly since $(1+\delta)^{-1} \le r/s\le 1+\delta$ 
there are at most $1+2r\delta$ choices for $s$.  
Finally since $Aru \in [x,x(1+\delta)]$ there 
are at most $1+x\delta/(Ar) \le 3 x\delta/(Ar)$ choices 
for $u$, and similarly there are at most 
$1+x\delta/(Bs) \le 3 x\delta/(Ar)$ choices for $v$.  Thus 
the total number of such solutions is 
\[
 \le 9x^2 \delta^2 \sum_{{\max(A, r)\ge 1/\delta}\atop {Ar\le x\delta(1+\delta)}} \frac{(1+2A\delta)}{A^2} \frac{(1+2r\delta)}{r^2}.
\]
This may be bounded by 
\begin{align*}
& \le 18x^2 \delta^2 
\sum_{x\delta \ge A\ge 1/\delta} \frac{1+2A\delta}{A^2} 
\sum_{x\delta \ge r\ge 1} \frac{1+2r\delta}{r^2} 
\\
&\le 40 x^2 \delta^3 (1+2\log x) (1+2\delta \log x),
\end{align*}
proving our Proposition.
 \end{proof}
\vskip.1in
When $S$ is of size $x\delta$ (which holds if $x\delta \gg x^{\frac 15}\log x$),  Proposition \ref{fourth} shows that provided $\delta =o(1/\log x)$, 
the fourth moment matches the fourth moment of a Gaussian.  

We now use the ideas of this section to bound the term $\sum_{p \in {\mathcal L}} {\Bbb E}|\Delta_p f|^3$,  
arising in Proposition 2.2.  By the Cauchy-Schwarz inequality we have 
$$ 
{\Bbb E} |\Delta_p f|^3 \le  ({\Bbb E} |\Delta_p(f)|^2 )^{\frac 12} ({\Bbb E} |\Delta_p (f)|^4)^{\frac 12}.
$$ 
As before let ${\mathcal N}(p)$ denote the square-free integers in $(x/p, (x+y)/p]$ 
which are not multiples of $p$.  Then  
$$
{\Bbb E}|\Delta_p f|^2 = 2 \sum_{k \in {\mathcal N}(p)} 1 \le 2 \Big(1+ \frac yp\Big). 
$$ 
Further we have 
$$ 
{\Bbb E} |\Delta_p f|^4 = 8 
\sum_{{k_1, k_2, k_3, k_4 \in {\mathcal N}(p)}  \atop {k_1k_2k_3k_4 =\square}} 1, 
$$ 
and arguing as in Proposition 3.1 we find that this is $\ll (1+y/p)^2$ provided 
$\delta \le 1/\log x$, where $\ll$ means $\le$ up to a constant multiple.  Therefore we conclude that 
$$ 
{\Bbb E}|\Delta_p(f)|^3 \ll 1+\Big(\frac yp\Big)^{\frac 32}.
$$
Using this estimate for primes $z<p\le y$ we find that 
$$ 
\sum_{z<p\le y} {\Bbb E} |\Delta_p f|^3\ll y^{\frac 32} \sum_{z<p} \frac{1}{p^{\frac 32} }
\ll \frac{y^{\frac 32}}{z^{\frac 12}}.
$$
If $p>y$ then ${\Bbb E}|\Delta_p f|^3 =0$ unless there happens to 
be a square-free multiple of $p$ in  $[x,x+y]$ and in this case the 
expectation is $4$.  Such primes $p$ must 
divide $\prod_{x<n\le x+y} n < (x+y)^{y}$ and there are at most $y \log (x+y)/\log y$ possibilities 
for such primes $p$.   We conclude that 
\begin{equation} 
\label{est3}
\sum_{p \in {\mathcal L}} {\Bbb E} |\Delta_p f|^3 
\ll \frac{y^{\frac 32}}{z^{\frac 12}} + y \frac{\log x}{\log y}. 
\end{equation}

\section{Proof of the Theorem}   

\noindent We now estimate $\var(\sum_{p \in {\mathcal L}} T_p)$ where we recall that 
 $T_p$ is defined in \S 2.  This 
quantity equals 
\[
 \sum_{p, q \in {\mathcal L}} \sum_{{k \in {\mathcal N}(p)} \atop {\ell \in {\mathcal N}(p) \backslash \{k\}} }
 \sum_{ {k^{\prime}\in {\mathcal N}(q) } \atop { \ell^{\prime}\in {\mathcal N}(q)\backslash \{{k^{\prime}}\}} }
 \frac{1}{\omega_{\mathcal  L}(\ell p) \omega_{\mathcal L}(\ell^{\prime } q)} 
{\Bbb E}(X(k)X(\ell) X(k^{\prime}) X({\ell^{\prime}})).
\]
Above we allow for the possibility that $p$ equals $q$.  
The expectation above is $1$ exactly when $k\ell k^{\prime}\ell^{\prime}$ is a square 
and zero otherwise.  Thus writing $n_1=kp$, $n_2=\ell p$, $n_3=k^{\prime}q$, $n_4=\ell^{\prime}q$ the quantity we seek is 
\[ 
\sum_{n_1, n_2, n_3, n_4} \frac{\omega_{\mathcal L}((n_1, n_2)) \omega_{\mathcal L}((n_3, n_4))} 
{\omega_{\mathcal L}(n_2) \omega_{\mathcal L}(n_4)} 
\le \sum_{n_1, n_2, n_3, n_4} 1, 
\]
where $n_j \in [x,x(1+\delta)]$, $n_1\neq n_2$, $n_3\neq n_4$, the 
$n_j$ are square-free with $n_1n_2n_3n_4=\square$, and $(n_1,n_2)$ and $(n_3,n_4)$  must 
contain at least one prime factor from ${\mathcal L}$. 
 
We use the parametrization developed in \S 3 to estimate this. 
In the notation used there we find that our quantity above is 
\begin{equation}
 \label{qty}
 \le \sum_{A, B, r, s, u, v} 1.
\end{equation} 
 The sum 
above is over all $A$, $B$, $r$, $s$, $u$, $v$ as 
in our parametrization with the further restraints
that $Aru\neq Asv$ and $Brv \neq Bsu$, and that $A$ and $B$ must each contain 
at least one prime factor from ${\mathcal L}$.  Our goal is to 
show that the above quantity is bounded by 
\begin{equation}
 \label{goal}
O\Big(x^2 \delta^2 (1+\delta\log x) \Big( \frac 1{z} + 
\delta \log x  \Big)\Big).  
\end{equation}
 We will obtain this by first fixing $A$ and $r$ and 
analyzing the restraints on the other variables.   


Suppose first that $A$ and $r$ are chosen with 
$Ar > \delta (x+y)$.  If $u\neq v$ then both $u$ and $v$ must be $\ge 1/\delta$ and 
then we would have $Aru >x+y$.  Thus we must have $u=v$ and since  $(u,v)=1$ 
we have $u=v=1$.   Now $r\neq s$ (else $n_1=A=n_2$) and so we have 
that both $r$ and $s$ are at least $1/\delta$.  Thus we have $A \ll x\delta$ and 
$Ar \in [x,x+y]$.   Given $r$ the condition $(1+\delta)^{-1} \le r/s \le (1+\delta)$
shows that there 
are $\ll r\delta$ choices for $s$.  Similarly the 
inequality  $(1+\delta)^{-1} \le A/B\le (1+\delta)$ shows that given $A$ there are $\ll 1+A\delta$ 
choices for $B$.  Thus in this case our quantity is 
\begin{align*}
& \ll \sum_{A\ll x\delta}   (1+A \delta) \sum_{x/A 
\le r \le (x+y)/A} r\delta 
\ll x^2 \delta^2 \sum_{A\le x}   \frac{(1+A\delta)}{A^2} \\
&\ll x^2\delta^2 \Big(\frac 1{z}+\delta \log x\Big).
\\
\end{align*}
The final estimate follows because $A$ must contain at least 
one prime factor from ${\mathcal L}$, so that $A\ge z$ and hence
 $\sum_{A} 1/A^2 \ll 1/z$.

Now suppose that $Ar <\delta (x+y)$.  Recall that either $r=s=1$ 
or that both $r$ and $s$ are at least $1/\delta$.  We consider these 
cases separately. In the former case, note that 
$B$ has $\ll 1+A\delta$ choices, and $u$ and $v$ have at most $x\delta/A$ 
choices each.   Thus this case contributes 
\[ 
\ll \sum_{A\le \delta (x+y)}   (1+A\delta) x^2\delta^2/A^2 
\ll x^2 \delta^2 \Big( \frac 1{z} + \delta \log x  \Big).
\] 
  Now suppose that we have the 
second case when $r\ge 1/\delta$.  Here there are $\ll 1+A\delta$ 
choices for $B$, and given $r$ there are $\ll r\delta$ choices 
for $s$.  Finally there are $\ll x\delta/(Ar)$ choices for $u$ 
and $\ll x\delta/(Bs) \ll x\delta/(Ar)$ choices for $v$.  Thus 
the contribution here is, 
\begin{align*}
& \ll \sum_{A, r} (1+A\delta) r\delta \frac{x^2 \delta^2}{A^2 r^2}
\\
&\ll x^2 \delta^3 \sum_{A}   (1+A\delta)/A^2 \sum_r 1/r\\
&\ll x^2 \delta^3 \log x \Big( \frac{1}{z} +\delta\log x  \Big).
\end{align*}
Putting all these estimates together gives our bound 
\eqref{goal}. 

Using the bound \eqref{goal}, together with \eqref{est1}, \eqref{est2} and \eqref{est3} we 
conclude that $|{\Bbb E}(\phi(W)) -{\Bbb E}(\phi(Z))|$ is 
$$ 
\ll \Big(\frac{y}{S}\Big)^{\frac 32} \frac{1}{(\log 1/\delta)^{\frac 12}} + \frac{y\log x}{S^{\frac 32}\log y} 
+ \frac{y}{S} (1+\delta \log x)^{\frac 12} \Big( \frac{1}{\log 1/\delta} + \delta\log x\Big)^{\frac 12}. 
$$ 
To deduce the Theorem we combine the above bound with 
the following simple estimate for $|{\Bbb E}(\phi(W)) -{\Bbb E}(\phi(Z))|$.  Since 
$\phi$ is Lipschitz we have $|\phi(t)-\phi(0)| \le |t|$, 
and so 
\begin{align*} 
|{\Bbb E}(\phi(W))-{\Bbb E}(\phi(Z))| &\le |{\Bbb E}(\phi(W)-\phi(0))| + |{\Bbb E}((\phi(Z)-\phi(0))| 
\\
&\le {\Bbb E}(|W|) + {\Bbb E}(|Z|) \le 2. 
\end{align*}

\section{Proof of the Corollary} 

\noindent Let $\nu$ denote a Gaussian distribution with mean $0$ and variance $1$, and 
let $\mu$ denote a probability measure.   We claim that 
\begin{equation} 
\label{KKW}  
{\mathcal K}(\mu,\nu) \le 2\sqrt{{\mathcal W}(\mu,\nu)},
\end{equation} 
and Corollary 1.2 follows as a special case of this estimate.

For any real number $t$, and a parameter $\epsilon >0$ consider 
the function $\Phi^+(\xi;t,\epsilon)$ defined by 
$$\Phi^+(\xi;t,\epsilon)= 
\begin{cases} 
\epsilon &\text{if  } \xi\in (-\infty,t)\\ 
t+\epsilon -\xi &\text{if } \xi \in [t,t+\epsilon]\\ 
0&\text{if } \xi > t+\epsilon.
\end{cases}
$$
Note that $\Phi^+(\xi;t,\epsilon)$ is Lipschitz, and moreover 
$\Phi^+(\xi;t,\epsilon) \ge \epsilon \chi_{(-\infty,t)}(\xi)$.  Therefore 
\begin{align*}
\int_{-\infty}^t d\mu & \le \frac{1}{\epsilon} \int_{-\infty}^{t} \Phi^+(\cdot;t,\epsilon) d\mu
\le \frac{1}{\epsilon} \int_{-\infty}^{t} \Phi^+(\cdot; t,\epsilon) d\nu + \frac{{\mathcal W}(\mu,\nu)}{\epsilon} \\ 
&\le \int_{-\infty}^{t} d\nu + \epsilon + \frac{{\mathcal W}(\mu,\nu)}{\epsilon}.
\end{align*} 
Choosing $\epsilon =\sqrt{{\mathcal W}(\mu,\nu)}$ we obtain that 
$$ 
\int_{-\infty}^{t} d\mu \le \int_{-\infty}^{t} d\nu  + 2\sqrt{{\mathcal W}(\mu,\nu)}. 
$$ 

An analogous argument, using a similar Lipschitz minorant of the characteristic function 
of $(-\infty,t)$, gives that 
$$ 
\int_{-\infty}^{t} d\mu \ge \int_{-\infty}^{t} d\nu - 2\sqrt{{\mathcal W}(\mu,\nu)},
$$ 
and so (\ref{KKW}) follows.

 \vskip .1 in 
 
 \noindent {\bf Acknowledgements.}  We are happy to thank Persi Diaconis for 
 facilitating this collaboration, and many valuable discussions.   We are also grateful to 
 Adam Harper, Zeev Rudnick and the referee for some helpful comments.

\end{document}